\title{Harmonic Analysis of Symmetric Random Graphs \\
\emph{\normalsize Dedicated to the memory of Franti\v{s}ek Mat\'u\v{s}}}
\author{Steffen Lauritzen\\ University of Copenhagen}
\newtheorem{thm}{Theorem}
\newtheorem{lem}{Lemma}
\newtheorem{corr}{Corollary}
\newcommand{\dd}{\mathrm{d}}
\newcommand{\spa}{\mathcal{X}}
\newcommand{\R}{\mathbb{R}}
\newcommand{\N}{\mathbb{N}}
\newcommand{\E}{\mathbb{E}}
\begin{document}
\maketitle
\begin{abstract}This note %Following \cite{ressel:85,ressel:08} this note 
attempts to understand graph limits as defined by \cite{lov06} %,borgs_convergent_2008,lov12} 
in terms of 
harmonic analysis on semigroups. This is done by representing probability distributions of random exchangeable graphs as mixtures of characters on the semigroup of unlabeled graphs with node-disjoint union,  %\citep{berg:etal:84}, 
thereby providing an alternative derivation of de Finetti's theorem for random exchangeable graphs.%; see also \cite{lauritzen:88,diaconis:janson:08}; and \cite{lauritzen:08}.
\\[\baselineskip]

\noindent \emph{Key words:} characters, deFinetti's theorem, exchangeability, extreme point models, graph limits, graphons, positive definite functions, semigroups.
\end{abstract}
\section{Introduction}
Random exchangeable graphs are fundamental for the analysis of network data, see for example \cite{orbantz:roy:15} and \cite{lauritzen:rinaldo:sadeghi:18}. \cite{diaconis:janson:08} showed that the modern theory of graph limits \citep{lov06, lov12} gave a natural way of understanding such graphs. 

Some of the arguments associated with establishing the connection between exchangeable graphs and graph limits can appear complicated. However, as earlier demonstrated by \cite{ressel:85,ressel:08}, the theory of positive definite functions on Abelian semigroups provides a simple, generic structure for understanding exchangeability, and this is what this note attempts to exploit and explain in somewhat larger detail.  Also, this analysis establishes that the graphon based models for random exchangeable graphs can be understood as natural `exponential families' for random graphs, in contrast to the so-called exponential random graph models (ERGMs) \citep{holland:leinhardt:81,snijders:06}.

An ultra short summary of the main points in the developments below has appeared as part of \cite{lauritzen:rinaldo:sadeghi:19}.

\section{Harmonic analysis on semigroups}
We shall use the following concepts from \cite{berg:etal:76}, see also \cite{berg:etal:84}. We consider an Abelian semigroup $(S,+)$ with neutral element $0$ and identity involution.  A function $\rho: S\to \R$ is a \emph{character} if and only if
\[\rho(s+t)=\rho(s)\rho(t),\quad \rho(0)=1\]that is, a character behaves like an exponential function. 
A function $\phi:S\to \R$ is \emph{positive definite} if and only if 
\[\sum_{j,k=1}^n c_jc_k\phi(s_j+s_k)\geq0, \quad \forall n\in \N, c_j\in \R, s_j\in S;\] in other words, a function is positive definite if and only if all matrices of the form $\{m_{jk}=\phi(s_j+s_k)\}$ are positive semidefinite.

We let $\mathcal{P}(S)$ denote the set of positive definite functions on $S$, $\mathcal{P}^b(S)$ the set of all bounded positive definite functions, and $\mathcal{P}^b_1$ the set of bounded positive definite functions $\phi$ with $\phi(0)=1$. When equipped with the topology of pointwise convergence, $\mathcal{P}^b_1$ is a compact Hausdorff space. 

We let $\hat{S}$ denote the set of all bounded characters on $(S,+)$ and note that these form an Abelian semigroup under multiplication; $(\hat S,\cdot)$ is the \emph{dual semigroup} to $(S,+)$.
We shall be using the following result from \cite{berg:etal:76}.
\begin{thm}[Berg, Christensen, Ressel]\label{thm:bcr76}The set $\mathcal{P}^b_1$ is a Bauer simplex with the bounded characters as extreme points. In particular, any $\phi\in\mathcal{P}^b_1$  has a unique representation as  barycentre of a probability measure $\mu$ on $\hat S$:
\[\phi(s)=\int_{\hat S} \rho(s)\, \dd\mu(\rho)\]
and the set of bounded characters $\hat S$ form a closed subset of $\mathcal{P}^b_1$.
\end{thm}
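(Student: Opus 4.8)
The plan is to prove this as an instance of the general integral representation theory for compact convex sets, so the first order of business is to verify that $\mathcal{P}^b_1$ is genuinely a compact convex set and that the bounded characters $\hat{S}$ are exactly its extreme points. Compactness in the topology of pointwise convergence follows because $\mathcal{P}^b_1 \subseteq [-1,1]^S$: any $\phi \in \mathcal{P}^b_1$ satisfies $|\phi(s)| \leq \phi(0) = 1$ from the positive semidefiniteness of the $2\times 2$ matrix built from $\{0,s\}$, so $\mathcal{P}^b_1$ sits inside the Tychonoff-compact cube, and I would check it is closed there since positive definiteness, boundedness by $1$, and the normalization $\phi(0)=1$ are all preserved under pointwise limits. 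Convexity is immediate, since a convex combination of positive definite functions is positive definite and the normalization is affine.

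Next I would identify the extreme points. That a bounded character is extreme is the routine direction: if $\rho = \tfrac12(\phi_1 + \phi_2)$ with $\phi_i \in \mathcal{P}^b_1$, the multiplicativity $\rho(s+t) = \rho(s)\rho(t)$ forces, via a Cauchy--Schwarz argument applied to the positive definite forms $\phi_i$, that each $\phi_i$ must itself be multiplicative and hence equal to $\rho$. The harder inclusion is that every extreme point of $\mathcal{P}^b_1$ is in fact a character; here I expect to use the semigroup structure directly, considering for a fixed $t\in S$ the shifted function $\phi_t(s) = \phi(s+t)$ and showing that if $\phi$ is not multiplicative then $\phi$ decomposes as a nontrivial convex combination, contradicting extremality. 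This is the technical heart of the Berg--Christensen--Ressel machinery and I anticipate it is the main obstacle, since it requires the involution to be the identity and the full strength of positive definiteness rather than a soft convexity argument.

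With the extreme points pinned down as $\hat{S}$ and the set shown to be closed (an extreme point that is a pointwise limit of characters is again multiplicative, so $\hat{S}$ is closed in the compact space $\mathcal{P}^b_1$), the existence of a representing measure $\phi(s) = \int_{\hat S} \rho(s)\,\dd\mu(\rho)$ follows from the Choquet--Bishop--de Leeuw theorem for compact convex sets, with the integral over the closed extreme boundary. The barycentre formula itself is just the statement that evaluation at each fixed $s$ is a continuous affine functional, so it commutes with integration against $\mu$.

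Finally, for uniqueness I would invoke that $\mathcal{P}^b_1$ is not merely a Choquet simplex but a \emph{Bauer} simplex, meaning the extreme boundary is closed and the representing measure is unique. The cleanest route is to exhibit $\mathcal{P}^b_1$ as affinely homeomorphic to the space of probability measures on the compact metric (or at least compact Hausdorff) space $\hat{S}$, via the map $\mu \mapsto \int \rho(\cdot)\,\dd\mu$; showing this map is a continuous affine bijection onto $\mathcal{P}^b_1$, with uniqueness of $\mu$ following because characters are linearly independent enough to separate measures — concretely, the moment-type functionals $\mu \mapsto \int \rho(s)\,\dd\mu(\rho)$ determine $\mu$ by a Stone--Weierstrass argument, since products of the coordinate evaluations $\rho \mapsto \rho(s)$ generate a dense subalgebra of $C(\hat S)$. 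I would flag that this Stone--Weierstrass step, which closes the uniqueness argument, relies on $\hat S$ being compact and on the evaluations separating points of $\hat S$, both of which hold by construction.
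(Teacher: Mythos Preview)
The paper does not prove this theorem: it is quoted, without proof, as a known result from \cite{berg:etal:76} and is used throughout as a black box. There is therefore nothing in the paper to compare your proposal against.

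For what it is worth, your outline is a reasonable sketch of the argument one finds in \cite{berg:etal:76} and \cite{berg:etal:84}. One small point to tighten: the $2\times 2$ positive semidefinite matrix built from $\{0,s\}$ gives only $\phi(2s)\ge\phi(s)^2$, not $|\phi(s)|\le 1$ directly; you need to iterate this to $\phi(2^n s)\ge \phi(s)^{2^n}$ and then invoke boundedness of $\phi$ to rule out $|\phi(s)|>1$. Otherwise the compactness, the identification of extreme points via the shift $\phi_t(s)=\phi(s+t)$, the Choquet representation, and the Stone--Weierstrass uniqueness step are all in line with the standard treatment.
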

\section{Generalized exponential families}
Generalized exponential families were introduced in \cite{lauritzen:75} and studied further in \cite{lauritzen:88} where Theorem~\ref{thm:bcr76} was exploited to identify these as so-called \emph{extreme point models} for i.i.d.\ observations. 

More precisely, a generalized exponential family of distributions on a discrete state space $\spa$ is determined as a set of probability mass functions of the form
\[p(x;\theta) = \frac{\theta(t(x))}{c(\theta)}b(x),\quad  \theta \in \Theta\subseteq \tilde S, \quad x\in \spa\]
where $t: \spa\to S$ is a \emph{canonical sufficient statistic} with values in an Abelian semigroup $S$, $b$ defines a \emph{base measure}, $\tilde S$ are the (not necessarily bounded if $b$ is not uniform) characters, and $\Theta$ is the canonical parameter space
\[\Theta=\left\{\theta\in \tilde S: c(\theta) = \sum_{x\in \spa}\theta(t(x))b(x)<\infty\right\}.\]
Note that $c(\theta)$ is the \emph{Laplace transform} of the lifted base measure $b^*$ on $S$:
\[b^*(s)= \sum_{x\in t^{-1}(s)}b(x), \quad c(\theta)= \sum_{s\in S} \theta(s)b^*(s).\]
These exponential families share many (but not all) properties of more standard exponential families which have the same form but for the special semigroup $(\R^k,+)$, usually considered as a vector space.
\section{Graphs}
For any $n \in \N$ we let $\mathcal{L}_n$ and
$\mathcal{U}_n$ denote
the set of simple labeled graphs and simple unlabeled graphs with node set
$[n] := \{1,\ldots,n\}$; further, we let $\mathcal{L}_0=\mathcal{U}_0$ be the empty graph, $\mathcal{L}_\infty$ the set of infinite graphs with node set $\N$,  $\mathcal{L} = \bigcup_{n=0}^\infty
\mathcal{L}_n$, and $\mathcal{U}=\bigcup_{n=0}^\infty
\mathcal{U}_n$.

For  $G\in \mathcal{L}$  % and $G'$ in $\mathcal{L}$, 
we %$G \sim G'$ to signify
   % that they are isomorphic and 
   let $[G]$ denote the equivalence class of all labeled graphs
    isomorphic to $G$.  We will also without ambiguity think of $[G]$ as an unlabeled graph.

If $G$ and $H$ are in $\mathcal{L}$, we write $H \subseteq G$ if $H$ is a subgraph of $G$.  If $A$ is a subset of the node set in a labeled graph $G$, the subgraph induced by $A$ has $A$ as node set and edge set equal to all edges in $G$ wiht both endpoints in $A$. For $m \leq n$ and $G \in \mathcal{L}_n$,
$G[m]$ is the subgraph of $G$ induced by $[m]$. An infinite graph in $\mathcal{L}_\infty$ can be identified with its sequence of induced subgraphs $G_n=G([n]), n\in \N$; such a sequence is \emph{consistent} in the sense that for all $m\leq n$ we have $G_m=G_n([m])$.

    For $n<\infty$, $G \in \mathcal{L}_n$, and $\sigma \in \mathcal{S}_n$, the permutation
    group on $[n]$, we will let $G_\sigma$ be the graph obtained from $G$ by
    relabeling its nodes according to $\sigma$. Thus $i \sim j$ in $G$ if and
    only if $\sigma(i) \sim \sigma(j)$ in $G_\sigma$. 
\section{Symmetric random graphs}
We consider a probability distribution $P$ on $\mathcal{L}_\infty$ and say this is \emph{symmetric} if and only if 
\[P(G[n]=H)=P(G[n]= H_\sigma), \quad \forall n\in \N, H\in \mathcal{L}_n, \sigma\in S_n.\]
We then also say the random graph $G$ or its distribution $P$ is \emph{exchangeable} \citep{aldous:81,aldous:85,diaconis:freedman:81,matus:95,diaconis:janson:08,lauritzen:08}.

It is practical to represent the distribution $P$ through its \emph{M\"obius parameters} \citep{drt08,lauritzen:rinaldo:sadeghi:18,lauritzen:rinaldo:sadeghi:19} $Z(F), F\in \mathcal{L}^*$ where $\mathcal{L}^*=\bigcup_2^\infty \mathcal{L}^*_n $ is the set of labeled graphs with no isolated nodes and
\[Z(F) = P(G: F\subseteq G).\]
The quantities $Z$ and $P$ are related by the M\"obius transform; if $G\in \mathcal{L}_n$
\begin{equation}\label{eq:mobius}  P(G)  =  \sum_{B\in \mathcal{L}^*:G\subseteq B\subseteq K_n}  (-1)^{|B\setminus G|}  Z(B), \quad Z(F)=\sum_{B\in\mathcal{L}_n:F\subseteq B}P(B)\end{equation}
where $K_n$ is the complete graph on $n$ nodes and $|B\setminus C|$ denotes the cardinality of the differencs of the corresponding edge set.  A non-negative function $Z$ is a valid M\"obius parameter if and only if the left-most expression in (\ref{eq:mobius}) is non-negative for all $G\in \mathcal{L}$. We note that the positivity condition is not so easy to verify for a given function $Z$, so it is of interest to derive alternative representations.

Clearly, $P$ is symmetric if and only if $Z$ is symmetric, or, equivalently, if there is a function $\phi:\mathcal{U}\to \R$ such that
\begin{equation}\label{eq:symmetry}Z(F)= \phi([F]).\end{equation}
We also note that $(\mathcal{U},+)$ --- where $U+V=U\cup V$ is (node disjoint) graph union --- is an Abelian semigroup with the empty graph as its neutral element.  So the map 
$t: \mathcal{L}\to \mathcal{U}$  which maps any finite graph into its equivalence class: $t(F)=[F]$, appears as the canonical sufficient statistic in the generalized exponential family of exchangeable random graphs.
Moreover we have
\begin{lem}\label{lem:positivity}Let $G$ be a random exchangeable graph with M\"obius parameter $Z$ given as above. Then the function $\phi$ is bounded and positive definite on $(\mathcal{U},+)$; in other words, $\phi\in\mathcal{P}^b_1(\mathcal{U})$.
\end{lem}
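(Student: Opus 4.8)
Boundedness is immediate: since $\phi([F])=Z(F)=P(F\subseteq G)$ is a probability, $\phi$ takes values in $[0,1]$, and $\phi(0)=Z(\emptyset)=1$ because every graph contains the empty graph. So the real content is positive definiteness, and the plan is to obtain the full membership $\phi\in\mathcal{P}^b_1(\mathcal{U})$ as a byproduct. The strategy is to realize each value $\phi(s_j+s_k)$ as a genuine second moment $\E[Y_j Y_k]$ of edge-indicator products attached to node-disjoint copies of the relevant subgraphs inside the single infinite graph $G$.

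Concretely, I would fix unlabeled graphs $s_1,\dots,s_n\in\mathcal{U}$, reals $c_1,\dots,c_n$, and labeled representatives $F_j$ with $[F_j]=s_j$. Choosing an integer $M$ and exploiting that the node set of $G$ is all of $\N$, I would place pairwise node-disjoint labeled copies $F_j^{(a)}$ for $j\le n$, $a\le M$, and set $Y_{j,a}=\mathbf{1}\{F_j^{(a)}\subseteq G\}$. Symmetry of $P$ gives $\E[Y_{j,a}]=\phi(s_j)$, and—crucially—whenever $(j,a)\neq(k,b)$ the two copies sit on disjoint nodes, so $F_j^{(a)}\cup F_k^{(b)}$ is a node-disjoint union representing $s_j+s_k$, whence $\E[Y_{j,a}Y_{k,b}]=Z(F_j^{(a)}\cup F_k^{(b)})=\phi(s_j+s_k)$. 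This is exactly the step where symmetry and the well-definedness of $\phi$ on $\mathcal{U}$ are used, and it holds even for $j=k$ (with $a\neq b$), producing $\phi(2s_j)=\phi(s_j+s_j)$ rather than $\phi(s_j)$.

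The main obstacle is that $(\mathcal{U},+)$ is \emph{not} idempotent—$s+s=2s\neq s$—whereas indicators satisfy $Y^2=Y$, so a single family reproduces $\phi(s_j+s_k)$ off the diagonal but collapses $\phi(2s_j)$ to $\phi(s_j)$ on it. I would absorb this defect by averaging. Setting $T_j=\sum_{a=1}^M Y_{j,a}$ and separating the $M(M-1)$ off-diagonal index pairs from the $M$ coincident ones gives $\E[T_jT_k]=M^2\phi(s_j+s_k)+\delta_{jk}\,M[\phi(s_j)-\phi(2s_j)]$. Since $\E[(\sum_j c_jT_j)^2]\ge 0$, dividing by $M^2$ yields $\sum_{j,k}c_jc_k\phi(s_j+s_k)\ge -\tfrac1M\sum_j c_j^2[\phi(s_j)-\phi(2s_j)]$; letting $M\to\infty$ and using the already-established boundedness of $\phi$ drives the right-hand side to $0$ and proves positive definiteness. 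The only routine points to check are the existence of the disjoint copies, guaranteed because each $F_j$ is finite while $\N$ is infinite, and the coincidence identity above, so that together with $\phi(0)=1$ the conclusion $\phi\in\mathcal{P}^b_1(\mathcal{U})$ follows.
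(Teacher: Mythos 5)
Your proof is correct, and it is in fact more careful than the paper's own argument at the one point where care is genuinely needed. The paper uses the same basic device---writing $Z(F)=\E\bigl(\prod_{i\sim j\in F}X_{ij}\bigr)$ and representing $\phi([F_u]+[F_v])$ as $\E\bigl(\prod_{i\sim j\in F_u}X_{ij}\prod_{i\sim j\in F_v'}X_{ij}\bigr)$ for node-disjoint representatives---but then asserts in a single displayed chain that the resulting double sum equals $\E\bigl\{\sum_u c_u\prod_{i\sim j\in F_u}X_{ij}\bigr\}^2$. That identity fails on the diagonal for exactly the reason you isolate: indicators are idempotent, so $\E[Y_u^2]=\phi([F_u])$, whereas the quadratic form requires $\phi([F_u]+[F_u])=\phi(2[F_u])$; equivalently, the expectation of the product of the sum with its ``primed'' disjoint copy is not the same as the expectation of the square of the sum. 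Your replica construction---$M$ pairwise node-disjoint copies of each representative, $T_j=\sum_a Y_{j,a}$, the decomposition $\E[T_jT_k]=M^2\phi(s_j+s_k)+\delta_{jk}\,M[\phi(s_j)-\phi(2s_j)]$, and the limit $M\to\infty$ using boundedness of $\phi$---is the standard and correct way to close this gap, and the supporting details (existence of the disjoint copies inside $\N$, well-definedness of $\phi$ on $\mathcal{U}$ via symmetry of $Z$, $\phi(\emptyset)=1$, $0\le\phi\le1$) are all in order. What your replication-and-limit step buys is a fully rigorous derivation of reflection positivity from exchangeability alone; what the paper's one-line computation buys is brevity, at the price of an equality that is not literally true as displayed.
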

\begin{proof}Clearly $\phi(\emptyset)=1$ and $\phi$ is bounded. Introduce the binary random variables $X_{ij}$ for $i\neq j\in \N$ where
$X_{ij}=1$  if $i\sim j$ in $G$ and $X_{ij}=0$ otherwise; $X$ is the (random) adjacency matrix of $G$.  Then, clearly
\[Z(F)= \E\left(\prod_{ij: i\sim j \in F}X_{ij}\right).\]
So elementary calculations will verify that for $F_u$ and $F_v$ being node-disjoint we have
\begin{eqnarray*}\sum_{u,v=1}^n c_uc_v\phi([F_u]+[F_v])&=&
\sum_{u,v=1}^n c_uc_v\E\left(\prod_{i\sim j\in F_u}X_{ij}\prod_{i\sim j\in F_v'}X_{ij}\right)\\&=&
\E\left\{\sum_{u}c_u\prod_{i\sim j\in F_u}X_{ij}\right\}^2\geq 0,
\end{eqnarray*}
where $F'_v$ is a representative of $[F_v]$ which is node-disjoint from $F_u$ for all $u,v$.
This completes the proof.
\end{proof}
We note that the property in Lemma~\ref{lem:positivity} is referred to as \emph{reflection positivity} in \cite{lov06}.
A de Finetti type representation of random symmetric graphs can now be obtained  as a Corollary to Theorem~\ref{thm:bcr76}:
\begin{corr}[deFinetti's theorem for exchangeable random graphs]\label{cor:dissoc}Let $P$ be the distribution of an random graph with M\"obius parameter $Z$. Then $P$ is exchangeable if and only if there is a unique probability measure $\mu$ on $\hat{\mathcal{U}}$ such that for all $F\in \mathcal L$
\[Z(F)=\int_{\hat {\mathcal{U}} }
\rho([F])\, \dd \mu(\rho).\]
\end{corr}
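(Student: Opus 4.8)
The plan is to read the statement as a direct specialization of Theorem~\ref{thm:bcr76} to the semigroup $(\mathcal{U},+)$, with Lemma~\ref{lem:positivity} supplying the one nontrivial hypothesis. The whole argument rests on the three-way correspondence between the distribution $P$, its M\"obius parameter $Z$, and the function $\phi$ on $\mathcal{U}$, so I would set that up first and then invoke the two earlier results.

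For the forward implication I would start from exchangeability of $P$. By the remark preceding Lemma~\ref{lem:positivity}, exchangeability of $P$ is equivalent to symmetry of $Z$, which in turn is equivalent to the existence of a function $\phi:\mathcal{U}\to\R$ with $Z(F)=\phi([F])$ as in~(\ref{eq:symmetry}). Lemma~\ref{lem:positivity} then tells us that this $\phi$ lies in $\mathcal{P}^b_1(\mathcal{U})$. Applying Theorem~\ref{thm:bcr76} with $S=\mathcal{U}$ produces a unique probability measure $\mu$ on $\hat{\mathcal{U}}$ with $\phi(s)=\int_{\hat{\mathcal{U}}}\rho(s)\,\dd\mu(\rho)$ for every $s\in\mathcal{U}$; evaluating at $s=[F]=t(F)$ and using $Z(F)=\phi([F])$ gives exactly the claimed representation.

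For the converse I would assume the integral representation and deduce symmetry of $Z$ directly. The key observation is that relabelling does not change the isomorphism class, so $[F_\sigma]=[F]$ and hence $\rho([F_\sigma])=\rho([F])$ for every bounded character $\rho$; integrating against $\mu$ yields $Z(F_\sigma)=Z(F)$, that is, $Z$ is symmetric, and therefore $P$ is exchangeable. Uniqueness of $\mu$ transfers from Theorem~\ref{thm:bcr76} once I note that $\phi$ is determined on all of $\mathcal{U}$ by $Z$, since every unlabeled graph is of the form $[F]$ and the correspondence $Z\leftrightarrow\phi$ is bijective.

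The proof is short precisely because the substance is front-loaded into Lemma~\ref{lem:positivity} (positive definiteness) and Theorem~\ref{thm:bcr76} (the Bauer-simplex representation). The point I would watch most carefully is the passage between the two semigroups in play: $Z$ is a priori defined only on $\mathcal{L}^*$, whereas the representation is asserted for all $F\in\mathcal{L}$, and node-disjoint union of graphs without isolated nodes never produces isolated nodes. I must therefore check that defining $\phi$ on $\mathcal{U}$ via $Z(F)=\phi([F])$ is consistent---using that $Z(F)=P(F\subseteq G)$ is unchanged by adjoining isolated nodes---and that $\mu$ may be taken on $\hat{\mathcal{U}}$ as stated. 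I expect this bookkeeping to be the only genuine, and essentially routine, obstacle.
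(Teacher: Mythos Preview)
Your proposal is correct and is precisely the approach the paper intends: the corollary is stated without a separate proof, simply as the specialization of Theorem~\ref{thm:bcr76} to $(\mathcal{U},+)$ once Lemma~\ref{lem:positivity} has placed $\phi$ in $\mathcal{P}^b_1(\mathcal{U})$. Your forward and converse arguments, and the bookkeeping remark about $\mathcal{L}^*$ versus $\mathcal{L}$, are exactly the routine details one would fill in.
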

We note in particular that the extreme points of the convex set of exchangeable measures --- corresponding to the pure characters --- are \emph{dissociated} \citep{silverman:76}, i.e if $F=F_1\cup F_2$ and $F_1$ and $F_2$ are node disjoint subgraphs of $F$ it holds that
\[Z(F)= \rho([F])=\rho([F_1])\rho([F_2])=Z(F_1)Z(F_2)\]
or, in other words, we have for node-disjoint $F_1,F_2$:
\[P_\rho(F_1\cup F_2\subseteq G)=P_\rho(F_1\subseteq G)P_\rho(F_2\subseteq G).\]
\section{Characters on the semigroup of unlabeled graphs}
To get a more detailed version of de Finetti's theorem for exchangeable graphs we need to identify the characters on $(\mathcal{U},+)$ that enter into the representation above since not all bounded characters will be valid in the sense that their M\"obius transform would be non-negative. We shall say that such characters are \emph{fully positive} and denote the set of these characters by $\hat{\mathcal{U}}_+$.
We consider first the \emph{homomorphism densities}
\[ t_{\mathrm{hom}}(F,G) = \frac{\mathrm{hom}(F,G)}{ |G|^{|F|}},\]
where $F,G\in \mathcal{L}$ and $\mathrm{hom}(F,G)$ is the number of graph homomorphisms (edge preserving maps) from $F$ to $G$ and, as before $|F|$ and $|G|$ is the cardinality of the edge sets of $F$ and $G$.
These are multiplicative in the sense that for node disjoint subgraphs $F_1, F_2$
\begin{equation}\label{eq:hom_char}t_{\mathrm{hom}}(F_1\cup F_2,G) = t_{\mathrm{hom}}(F_1,G)t_{\mathrm{hom}}( F_2,G).\end{equation}
Noting that in fact $t_{\mathrm{hom}}(F,G)$ only depends on $(F,G)$ through their isomorphism classes $([F],[G])$ we can 
for $[G]\in \mathcal{U}$ define a character as
\begin{equation}\label{eq:basecharacter}\rho_{[G]}([F])=t_{\mathrm{hom}}(F,G).\end{equation}
In addition we shall need the \emph{injective homomorphism densities}
\[ t_{\mathrm{inj}}(F,G) = \frac{\mathrm{inj}(F,G)}{ (|G|)_{|F|}},\]
where $F,G\in \mathcal{L}$ and $\mathrm{inj}(F,G)$ is the number of injective graph homomorphisms (edge preserving maps) from $F$ to $G$,  and $$(x)_y=x(x-1)\cdots (x-y+1)$$ is the falling factorial.

The homomorphism densities can be understood as M\"obius parameters of a probability distributions of a random graph $F$, where vertices in $F$  are sampled  from $G$ with or without replacement respectively. If we let $p_{\mathrm{hom}}(F,G)$ and $p_{\mathrm{inj}}(F,G)$ denote the corresponding probability distributions, we have therefore  the inequalities 
\begin{equation}\label{eq:sampling}
\sup_{F\in \mathcal{L}_m}|p_{\mathrm{hom}}(F,G)- p_{\mathrm{inj}}(F,G) | \leq 1- \frac{(|G|)_{|F|}}{|G|^{|F|}}\leq \frac{{{|F|}\choose{2}}}{|G|},
\end{equation}
and therefore in particular
\begin{equation}\label{eq:samplingmob}
|t_{\mathrm{hom}}(F,G)- t_{\mathrm{inj}}(F,G) | \leq 1- \frac{(|G|)_{|F|}}{|G|^{|F|}}\leq \frac{{{|F|}\choose{2}}}{|G|},
\end{equation}
see \cite{freedman:77} and Lemma 3.3 in \cite{lauritzen:rinaldo:sadeghi:19} or Lemma 2.1 in \cite{lov06} who give the slightly weaker bound to the right. 

 In particular we note that for any fixed $F$, the right-hand side in (\ref{eq:sampling}) tends to zero as $|G|\to \infty$.

Note now that for  any exchangeable random graph with M\"obius parameter $Z$, the law of total probability yields
\begin{equation}\label{eq:total}Z(F)=\sum_{G\in \mathcal{L}_n}t_{\mathrm{inj}}(F, G)P^n_Z(G),\end{equation}
where $P^n_Z$ the induced measure on $\mathcal{L}_n$, 
We then have
\begin{thm}\label{thm:characters}For every $[G]\in \mathcal{U}$, the function $\rho_{[G]}$ is a bounded and fully positive character in $\hat{\mathcal{U}}_+$. Further, these characters are dense in $\hat{\mathcal{U}}_+$: if we let $\mathcal{B}$ denote the closure of $\{\rho_{[G]},[G]\in \mathcal{U}\}$
within $\mathcal{P}^b_1$ (pointwise convergence), we have $\mathcal{B}=\hat{\mathcal{U}}_+$.
\end{thm}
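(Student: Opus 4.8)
The plan is to show the two inclusions $\mathcal{B}\subseteq\hat{\mathcal{U}}_+$ and $\hat{\mathcal{U}}_+\subseteq\mathcal{B}$, after first confirming that each $\rho_{[G]}$ is a bounded, fully positive character. The multiplicative identity (\ref{eq:hom_char}) is exactly the character property $\rho_{[G]}([F_1]+[F_2])=\rho_{[G]}([F_1])\rho_{[G]}([F_2])$, while $\rho_{[G]}(\emptyset)=1$ and $0\le t_{\mathrm{hom}}(F,G)\le 1$ give normalization and boundedness; being a character, $\rho_{[G]}$ is automatically positive definite, since $\sum_{j,k}c_jc_k\rho_{[G]}(s_j+s_k)=\bigl(\sum_j c_j\rho_{[G]}(s_j)\bigr)^2\ge 0$, so $\rho_{[G]}\in\mathcal{P}^b_1$. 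For full positivity I would use the sampling interpretation indicated before (\ref{eq:sampling}): let $H_G$ be the random graph on $\N$ obtained by drawing i.i.d.\ uniform node labels from $G$ and joining $i\sim j$ whenever their labels are adjacent in $G$. Then $H_G$ is exchangeable with M\"obius parameter $\Pr(F\subseteq H_G)=t_{\mathrm{hom}}(F,G)=\rho_{[G]}([F])$, so, being a genuine distribution, its M\"obius transform (\ref{eq:mobius}) is non-negative and $\rho_{[G]}\in\hat{\mathcal{U}}_+$.

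For $\mathcal{B}\subseteq\hat{\mathcal{U}}_+$ I would first note that $\mathcal{U}$ is countable, so $\mathcal{P}^b_1$ is compact and metrizable and closures may be taken along sequences. Since the bounded characters form a closed set by Theorem~\ref{thm:bcr76}, every element of $\mathcal{B}$ is again a bounded character. Full positivity survives the limit because, for each fixed $G\in\mathcal{L}_n$, the M\"obius transform (\ref{eq:mobius}) is a finite signed sum of pointwise evaluations of $\phi$ and hence continuous in $\phi$; a pointwise limit of fully positive characters therefore still has non-negative M\"obius transform.

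The substantial direction is $\hat{\mathcal{U}}_+\subseteq\mathcal{B}$. Fix $\rho\in\hat{\mathcal{U}}_+$, let $Z(F)=\rho([F])$ be the associated M\"obius parameter and $P^n_Z$ the induced law on $\mathcal{L}_n$. Combining the total-probability identity (\ref{eq:total}) with the sampling bound (\ref{eq:samplingmob}), and using that every $G\in\mathcal{L}_n$ has $n$ nodes, I would obtain
\[\Bigl|\rho([F])-\sum_{G\in\mathcal{L}_n}\rho_{[G]}([F])\,P^n_Z(G)\Bigr|\le\max_{G\in\mathcal{L}_n}\bigl|t_{\mathrm{inj}}(F,G)-t_{\mathrm{hom}}(F,G)\bigr|\le\binom{|F|}{2}\big/n,\]
which tends to $0$ as $n\to\infty$, so the mixtures $\phi_n:=\sum_{G\in\mathcal{L}_n}\rho_{[G]}\,P^n_Z(G)$ converge pointwise to $\rho$. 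Each $\phi_n$ is the barycentre of $\nu_n:=\sum_{G\in\mathcal{L}_n}P^n_Z(G)\,\delta_{\rho_{[G]}}$, a probability measure supported on the compact set $\mathcal{B}$. Passing to a weakly convergent subsequence $\nu_{n_k}\to\nu$ and using that each evaluation $\psi\mapsto\psi(s)$ is bounded and continuous, the barycentres converge too, giving $\rho=\int_{\mathcal{B}}\psi\,\dd\nu(\psi)$ with $\nu$ again supported on the closed set $\mathcal{B}\subseteq\hat{\mathcal{U}}$. Thus $\nu$ is a representing measure for $\rho$ concentrated on the extreme points of the Bauer simplex $\mathcal{P}^b_1$; by the uniqueness in Theorem~\ref{thm:bcr76}, and because the representing measure of the extreme point $\rho$ is $\delta_\rho$, we get $\nu=\delta_\rho$, and a point mass supported on $\mathcal{B}$ forces $\rho\in\mathcal{B}$.

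I expect the main obstacle to be precisely this last upgrade. The inequalities (\ref{eq:total})--(\ref{eq:samplingmob}) only present $\rho$ as a limit of convex mixtures $\phi_n$ of the $\rho_{[G]}$, i.e.\ as a point of their closed convex hull, whereas $\mathcal{B}$ is the closure of the pure characters themselves; bridging this gap is exactly where the Bauer-simplex uniqueness of barycentric representations from Theorem~\ref{thm:bcr76} is essential, and the delicate point is to verify that the limiting measure $\nu$ remains supported on $\mathcal{B}\subseteq\hat{\mathcal{U}}$ so that it qualifies as a representing measure to which that uniqueness applies.
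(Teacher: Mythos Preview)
Your proposal is correct and follows the same route as the paper: for the hard inclusion $\hat{\mathcal{U}}_+\subseteq\mathcal{B}$ the paper likewise uses (\ref{eq:total})--(\ref{eq:samplingmob}) to build probability measures $\mu^n_Z$ supported on $\mathcal{B}$, extracts a weak accumulation point by compactness, and then invokes the uniqueness in Theorem~\ref{thm:bcr76} to force the limiting representing measure to be $\delta_\rho$. Your write-up is in fact more explicit than the paper's about the two points you flag as delicate---why the limit measure stays supported on the closed set $\mathcal{B}$ and why uniqueness collapses it to a point mass---but the architecture is identical.
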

\begin{proof} For any $[G]$, $\rho_{[G]}$ is clearly a bounded and fully positive character by (\ref{eq:hom_char}). Since the set of bounded characters $\hat{\mathcal{U}}$ is closed by Theorem~\ref{thm:bcr76} and therefore also the set of fully positive characters, we clearly have $\mathcal{B}\subseteq \hat{\mathcal{U}}_+$. We need to establish the reverse inequality.  

  Thus let  $Z\in\hat{ \mathcal{U}}_+$ and let $P_Z$ be the corresponding probability measure on $\mathcal{L}_\infty$ with $P^n_Z$ the induced measure on $\mathcal{L}_n$. By (\ref{eq:sampling}) and (\ref{eq:total}) it then holds for any $F\in \mathcal{L}_m$ and any $n\geq m$ that
\begin{equation}\label{eq:approx}
Z(F)=\sum_{G\in \mathcal{L}_n}t_{\mathrm{hom}}(F, G)P^n_Z(G) + R(m,n),\end{equation}
where
\[|R(m,n)|\leq  \sum_{G\in \mathcal{L}_n}|t_{\mathrm{hom}}(F,G)- t_{\mathrm{inj}}(F,G) | P^n_Z(G)\leq 1- \frac{(n)_{m}}{n^{m}}.\]
Using (\ref{eq:basecharacter}) we can rewrite (\ref{eq:approx}) as 
\begin{equation}\label{eq:finite_deF}Z(F) = \int_{\mathcal{B}}\rho_{[G]}(F)\,d\mu^n_Z([G])+R(m,n)\end{equation}
which now holds for all $n \geq m$. 

Since $\mathcal{B}\subseteq \mathcal{U}$ is compact, so is the set of probability measures on $\mathcal{B}$ and hence the sequence $\mu^n_Z, n\in \N$ must have an accumulation point $\mu^*_Z$ which then satisfies for all $n\in \N$:
\[Z(F) = \int_{\mathcal{B}}\rho_{[G]}(F)\,d\mu^*_Z([G]).\]
Now, as the integral representation is unique by Theorem~\ref{thm:bcr76} and Corollary~\ref{cor:dissoc}, we must have $\mu^*_Z$ concentrated on $Z\in \hat{\mathcal{U}}_+$ and thus $\hat{\mathcal{U}}_+\subseteq \mathcal{B}$ which completes the proof.
\end{proof}
Note that, in effect, (\ref{eq:finite_deF}) yields a finite deFinetti type representation for an exchangeable random graph, see also \cite{diaconis:freedman:80} and \cite{lauritzen:rinaldo:sadeghi:19}. We state this results as its own corollary and note that this is in fact Theorem $1^{j.e}$ of \cite{matus:95}:
\begin{corr}[deFinetti's theorem for finitely exchangeable random graphs]Let $\mu_m$ be the induced distribution of  $G[m]$ where $G$ is a finitely exchangeable random graph in $\mathcal{L}_n$. Then there is an exchangeable random graph $G^*$ such that the distribution $\mu^*_m$ of $G^*[m]$ satisfies
\[||\mu_m-\mu_m^*||_\infty\leq 2R(m,n)\leq \frac{m(m-1)}{n}\]
where $||\cdot||_\infty$ is the total variation norm.
\end{corr}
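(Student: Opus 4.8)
The plan is to exhibit a concrete infinite exchangeable graph $G^*$ through ``sampling with replacement'' and then to control the two $m$-node marginals by a coupling that reduces everything to the collision estimate behind (\ref{eq:sampling}). The conceptual point is to avoid passing through M\"obius inversion --- which would amplify the error $R(m,n)$ uncontrollably --- and instead to work directly with the induced laws.

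First I would construct $G^*$. Draw $G\in\mathcal{L}_n$ from its law $P^n_Z$ and, independently, a sequence $v_1,v_2,\dots$ of nodes i.i.d.\ uniform on $[n]$, and set $i\sim j$ in $G^*$ exactly when $v_i\sim v_j$ in $G$. Exchangeability of the $v_i$ immediately makes $G^*$ an infinite exchangeable graph. To read off its M\"obius parameter, note that a map realizing $F$ inside $G^*$ must carry adjacent (hence distinct) vertices to adjacent vertices, i.e.\ must be a homomorphism into $G$, so $P(F\subseteq G^*\mid G)=t_{\mathrm{hom}}(F,G)=\rho_{[G]}([F])$ and $Z^*(F)=\sum_{G}t_{\mathrm{hom}}(F,G)P^n_Z(G)$. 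This is precisely the leading term of (\ref{eq:finite_deF}); by Theorem~\ref{thm:characters} it is a mixture of fully positive characters and hence, by Corollary~\ref{cor:dissoc}, a bona fide M\"obius parameter of an exchangeable graph.

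Next I would identify the marginals with the two sampling schemes. By construction the law $\mu_m^*$ of $G^*[m]$ is the with-replacement mixture $\sum_G p_{\mathrm{hom}}(\cdot,G)P^n_Z(G)$. On the other hand, finite exchangeability of $G$ lets one replace the deterministic restriction $G[m]$ by the induced subgraph on a uniformly chosen $m$-subset of $[n]$, so that $\mu_m=\sum_G p_{\mathrm{inj}}(\cdot,G)P^n_Z(G)$ is the without-replacement mixture; this is the distributional strengthening of (\ref{eq:total}). By the triangle inequality $\|\mu_m-\mu_m^*\|_\infty\le\sum_G P^n_Z(G)\,\|p_{\mathrm{inj}}(\cdot,G)-p_{\mathrm{hom}}(\cdot,G)\|_\infty$, which reduces the estimate to a single fixed $G$.

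For fixed $G$ the two laws couple transparently: sample the $m$ nodes with replacement, and on the event that all are distinct the outcome is already a without-replacement sample, so the two graphs coincide. Hence their total variation distance is at most the collision probability $1-(n)_m/n^m=R(m,n)$, and the summed ($\ell^1$) discrepancy is twice this --- the origin of the factor $2$ --- giving $\|\mu_m-\mu_m^*\|_\infty\le 2R(m,n)\le 2\binom{m}{2}/n=m(m-1)/n$ via (\ref{eq:sampling}). I expect the only genuinely delicate step to be the marginal identification of $\mu_m$: one must verify that finite exchangeability really turns $G[m]$ into the uniform without-replacement sample, since everything downstream rests on this distributional statement rather than the M\"obius-parameter identity (\ref{eq:total}) used earlier.
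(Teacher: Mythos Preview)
Your proposal is correct and follows essentially the same route as the paper: both define $G^*$ (equivalently $\mu^*$) via the with-replacement/homomorphism mixture $\mu^*_m(\cdot)=\sum_G p_{\mathrm{hom}}(\cdot,G)P^n_Z(G)$, identify $\mu_m$ with the without-replacement mixture via finite exchangeability, and then bound the total variation by the collision estimate (\ref{eq:sampling}). Your version is simply more explicit, supplying the probabilistic construction of $G^*$ and the coupling that the paper's terse invocation of (\ref{eq:sampling}) leaves implicit.
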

\begin{proof}We define $\mu^*$ by its M\"obius parameter $Z^*$ as
\[Z^*(F) = \sum_{G\in \mathcal{L}_n}t_{\mathrm{hom}}(F, G)P^n_Z(G)\]
or, equivalently
\[\mu^*_m(F) = \sum_{G\in \mathcal{L}_n}p_{\mathrm{hom}}(F, G)P^n_Z(G)\]
and now (\ref{eq:sampling}) yields
\begin{eqnarray*}||\mu_m-\mu_m^*||_\infty &=&2\sup_{F}|\mu_m(F)-\mu^*_m(F)|\\&\leq&
2\sum_{G\in \mathcal{L}_n}|p_{\mathrm{inj}}(F, G)-p_{\mathrm{hom}}(F, G)|P^n_Z(G)\\&
\leq &2 R(m,n)\leq \frac{m(m-1)}{n}
\end{eqnarray*}
which was required.
\end{proof}

Elements of $\mathcal{B}$ can naturally be interpreted as \emph{limits of unlabeled graphs} \citep{lov06,borgs_convergent_2008,lov12} by the embedding $[G]\mapsto \rho_{[G]}$ and 
\[ \lim_{n\to \infty} [G_n]= U \iff \lim_{n\to \infty}\rho_{[G_n]}(F) =U(F), \quad \forall F\in \mathcal{U}\]\enlargethispage{\baselineskip}
so we can write $\mathcal{U}_\infty=\mathcal{B}=\hat{\mathcal{U}}_+$.

In addition,  the characters can be represented by (equivalence classes of) functions $W:[0,1]^2\to[0,1]$   also known as a \emph{graphons}, see references above, corresponding to adjacency matrices of infinitely exhangeable random arrays \citep{aldous:81,hoover:79,diaconis:freedman:81}.
This is contained in the following result:
\begin{thm}The fully positive and bounded characters $\hat{\mathcal{U}}_+$ on $(\mathcal{U},+)$ are exactly the functions $\rho$
satisfying for $F\in \mathcal{L}^*_n$
\[\rho([F])=\int_{[0,1]^n}\prod_{ij: i\sim j \in F}W(u_i,u_j)\,\dd u\]
for some measurable, symmetric function $W:[0,1]^2\to[0,1].$ The function $W$ is unique up to measure-preserving transformations of the unit interval.
\end{thm}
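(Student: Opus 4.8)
The plan is to prove the two inclusions separately and then invoke the established uniqueness theory for graph limits. Throughout I write $n=|V(F)|$ for the number of nodes of $F$, which is the dimension of the integral in the statement.

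\emph{From graphons to characters.} First I would fix a measurable symmetric $W:[0,1]^2\to[0,1]$ and define $\rho$ by the stated integral. That $\rho(\emptyset)=1$ is the empty-product convention, and $\rho$ is bounded by $1$ since the integrand lies in $[0,1]$. The character property is immediate: if $F=F_1\cup F_2$ with $F_1,F_2$ node-disjoint, then $[F]=[F_1]+[F_2]$, the product over the edges of $F$ splits into a product over the edges of $F_1$ and of $F_2$ involving disjoint sets of integration variables, and the integral factorises, giving $\rho([F_1]+[F_2])=\rho([F_1])\rho([F_2])$. To see that $\rho$ is \emph{fully} positive, not merely positive definite, I would exhibit an exchangeable random graph whose M\"obius parameter is exactly $\rho$, namely the $W$-random graph: sample $U_1,U_2,\ldots$ i.i.d.\ uniform on $[0,1]$ and, conditionally on the $U$'s, join $i\sim j$ independently with probability $W(U_i,U_j)$. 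Conditioning and then integrating yields $P(F\subseteq G)=\int_{[0,1]^n}\prod_{ij:i\sim j\in F}W(u_i,u_j)\,\dd u=\rho([F])$, so $\rho$ is the M\"obius parameter of a genuine probability distribution and its M\"obius transform is the non-negative family $\{P(G)\}$. Hence $\rho\in\hat{\mathcal{U}}_+$.

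\emph{From characters to graphons.} For the converse I would use Theorem~\ref{thm:characters}, which identifies $\hat{\mathcal{U}}_+$ with the pointwise closure $\mathcal{B}$ of the base characters $\rho_{[G]}$. The key observation is that each $\rho_{[G]}$ is already of the required form: representing the finite graph $G$ by its pixel step-function graphon $W_G$ (partition $[0,1]$ into $|V(G)|$ equal blocks and let $W_G$ be the adjacency indicator on the corresponding blocks) gives $\rho_{[G]}([F])=t_{\mathrm{hom}}(F,G)=\int_{[0,1]^n}\prod_{ij:i\sim j\in F}W_G(u_i,u_j)\,\dd u$. Thus any $\rho\in\mathcal{B}$ is a pointwise limit, along a sequence $[G_k]$, of homomorphism densities into graphons; equivalently $(G_k)$ is a convergent graph sequence in the sense of \cite{lov06}. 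At this point I would invoke the existence theorem for graph limits of \cite{lov06,lov12}: every convergent graph sequence admits a limit graphon $W$ with $t_{\mathrm{hom}}(F,G_k)\to\int_{[0,1]^n}\prod_{ij:i\sim j\in F}W(u_i,u_j)\,\dd u$ for every $F$, so that $\rho([F])$ acquires the asserted integral representation. Alternatively, since by Corollary~\ref{cor:dissoc} $\rho$ corresponds to a \emph{dissociated} exchangeable random graph, the same conclusion follows from the Aldous--Hoover representation of dissociated exchangeable arrays \citep{aldous:81,hoover:79}.

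\emph{Uniqueness.} Finally, uniqueness up to measure-preserving transformations is exactly the statement that two graphons with identical homomorphism densities for all finite $F$ are weakly isomorphic; for this I would cite the moment-uniqueness result of Borgs, Chayes and Lov\'asz as presented in \cite{lov12}. The hard part will be the existence half of the character-to-graphon direction: passing from the abstract pointwise limit $\rho\in\mathcal{B}$ to a concrete limit object $W$ is precisely where the depth of graph-limit theory is needed, resting on the weak regularity lemma and the resulting compactness of graphon space (or, on the probabilistic side, on Aldous--Hoover). By contrast the graphon-to-character direction is a direct computation, and uniqueness reduces to a citation.
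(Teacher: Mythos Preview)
Your proposal is correct and follows essentially the same route as the paper: verify directly that graphon integrals give fully positive bounded characters, represent each base character $\rho_{[G]}$ by its step-function graphon, and then pass to the closure $\mathcal{B}=\hat{\mathcal{U}}_+$ via the existence and uniqueness results of graph-limit theory in \cite{lov06,lov12}. Your write-up is in fact more explicit than the paper's (which simply calls the first direction ``straightforward'' and refers out for the convergence step), and your added remark that full positivity follows from the $W$-random graph construction, together with the Aldous--Hoover alternative for the converse, are both natural elaborations of what the paper leaves implicit.
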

\begin{proof}It is straightforward to see that these are fully positive and bounded characters. Also for every graph $[G]\in \mathcal{U}_n$ we can construct a specific $W_{[G]}$ by partitioning the unit interval into $n$ subintervals \[[0,1)=[0,1/n)\cup \cdots \cup,[(n-1)/n)=A_1\cup \cdots \cup A_n\] and letting for $G$ being a representative of $[G]$
\[W_{[G]}(u,v) = \begin{cases} 1 & \text{ if $i\sim j$ in $G$, $u\in A_i, v\in A_j$}\\ 0 & \text{ otherwise} \end{cases}\] and then show that these converge in a suitable metric exactly when $[G_n]\to U$. We refrain from giving the details of this and refer to \cite{lov06} or \cite{lov12}.
\end{proof}
The graphon representation of a graph limit has its advantages, but also its disadvantages in that it can be difficult to identify exactly when two graphons $W$ and $W'$ are representing the same character $\rho$ since there are many measure-preserving transformations of the unit interval. In that sense, the representation as a character $Z$ is more direct and there is a one-to-one correspondence between $Z$ and the corresponding random graph.  However, in general, it is not so easy to decide whether a given function $Z$ specifies a valid probability distribution, i.e.\ satisfies the positivity restriction in (\ref{eq:mobius}). 
\section*{Acknowledgement}I am grateful to an anonymous referee for sharp and constructive comments, correcting substantial shortcomings in the exposition. 
%\bibliographystyle{apalike}
%\bibliography{strings,jasbib,mat}

\end{document}